\theoremstyle{plain}
\newtheorem{prop}{Proposition}
\newtheorem{theo}[prop]{Theorem}
\theoremstyle{remark}
\newtheorem{rema}[prop]{Remark}
\theoremstyle{definition}
\newtheorem{defi}[prop]{Definition}
\numberwithin{equation}{section}
\newcommand{\PP}{{\mathbb P}}
\newcommand{\bP}{{\mathbb P}}
\newcommand{\Q}{{\mathbb Q}}
\newcommand{\F}{{\mathbb F}}
\newcommand{\G}{{\mathbb G}}
\newcommand{\Z}{{\mathbb Z}}
\newcommand{\cO}{{\mathcal O}}
\newcommand{\cI}{{\mathcal I}}
\newcommand{\ra}{\rightarrow}
\newcommand{\eqto}{\stackrel{\lower1.5pt\hbox{$\scriptstyle\sim\,$}}\to}
\DeclareMathOperator{\Pic}{Pic}
\DeclareMathOperator{\Spec}{Spec}
\DeclareMathOperator{\Br}{Br}
\DeclareMathOperator{\Aut}{Aut}
\DeclareMathOperator{\coker}{coker}
\DeclareMathOperator{\Nm}{Nm}
\DeclareMathOperator{\cores}{cores}
\begin{document}
\title[Fibrations in Sextic del Pezzo surfaces]{Fibrations in sextic del Pezzo surfaces with mild singularities}
\author{Andrew Kresch}
\address{
  Institut f\"ur Mathematik,
  Universit\"at Z\"urich,
  Winterthurerstrasse 190,
  CH-8057 Z\"urich, Switzerland
}
\email{andrew.kresch@math.uzh.ch}
\author{Yuri Tschinkel}
\address{
  Courant Institute,
  251 Mercer Street,
  New York, NY 10012, USA
}
\address{Simons Foundation\\
160 Fifth Avenue\\
New York, NY 10010\\
USA}
\email{tschinkel@cims.nyu.edu}

\date{June 11, 2019} 

\maketitle

\section{Introduction}
\label{sec.introduction}
In this paper we study families of degree $6$ del Pezzo surfaces over
higher-dimensional bases,
with a view toward existence of good models.
There is an extensive literature on degenerations of del Pezzo surfaces,
see, e.g., \cite{fujita}, \cite{corti}, and
specifically sextic del Pezzo surfaces, e.g.,
\cite{fukuokarelative}.
These degenerations play a role in the
Minimal Model Program
and enter the study of related moduli problems, e.g.,
\cite{schaffler}, \cite{fukuokarefinement}.
They are also relevant in arithmetic applications and the study of
rationality, see \cite{AHTV}.

Our starting point is the work of
Blunk \cite{blunk} and Kuznetsov \cite{kuznetsovsextic},
describing isomorphism classes of sextic del Pezzo surfaces in terms
of supplementary data, taking the form of \'etale algebras
over a given base field of degrees $2$ and $3$,
together with Brauer group elements.

This paper continues our previous work, in which we studied
families of del Pezzo surfaces of degree
$9$ (Brauer-Severi surfaces \cite{KTsurf}, \cite{KTsb})
and $8$ (involution surfaces \cite{KTinvsurf}, \cite{KTbrinv},
which include the case of quadric surfaces).
(The case of del Pezzo surfaces of
degree $7$ is not interesting, since they are never minimal.)
The passage to families entails a study of ramification patterns for the
\'etale algebras and the Brauer group elements.

In our previous work, as well as here,
the main results concern the existence of families,
with specified fiber types and global singularity descriptions.
This is applied, for instance, to Brauer group computations
and constructions in families, e.g., in
the study of rationality problems as in
\cite{HKTconic}, \cite{HKTthree}.

In this paper, we require a
\emph{regular branch divisor} for the covers and allow only
limited ramification of Brauer group elements.
From a geometric point of view, we need to specify
coverings of the base of degrees $2$ and $3$ with
Brauer classes satisfying compatibility conditions.
We obtain five degeneration types, corresponding to the possible ramification
of the pair of coverings,
which we call \emph{basic}.
These are parametrized by
conjugacy classes of cyclic subgroups of $\mathfrak S_2\times \mathfrak S_3$;
see Section \ref{sect.basicdegen}.
While the structure of degree $2$ covers is well understood,
the branch locus of degree $3$ covers is typically singular and becomes
nonsingular only after birational modification of the base;
see \cite{KTtriple} and references therein.
In this paper we do not address
codimension $2$ phenomena, e.g.,
when the branch loci of the covers intersect.
Similar restrictions were present in the work of
Auel, Parimala, and Suresh on quadric surface bundles
\cite{APS}.

To draw a comparison with involution surface bundles,
basic degeneration corresponds to
Type I in \cite{KTinvsurf}, while the more general notion from op.\ cit.\ of mild degeneration
(essentially, degenerations that occur in families over a regular base
with singular fibers over a regular divisor) encompasses three additional types that involve ramification
of the Brauer group element.
In the case of Brauer-Severi surface bundles \cite{KTsurf},
there is only one kind of degeneration that occurs over a regular divisor,
connected with ramification of the Brauer group element.

\begin{defi}
Let $S$ be a regular scheme.
A \emph{sextic del Pezzo surface bundle}, or \emph{DP6 bundle}, over $S$ is a
flat projective morphism $\pi\colon X\to S$ such that the
locus $U\subset S$ over which $\pi$ is smooth is dense in $S$ and
the fibers of $\pi$ over points of $U$ are del Pezzo surfaces of degree $6$.
\end{defi}

Now we suppose that $2$ and $3$ are invertible in the local rings of $S$.

\begin{defi}
A sextic del Pezzo surface bundle $\pi\colon X\to S$
has \emph{basic degeneration} if
every singular fiber is geometrically isomorphic to one of the
following schemes:
\begin{itemize}
\item (Type I) four copies of the blow-up of $3$ collinear points on $\PP^2$ with
the $(-2)$-curve contracted, with each copy glued to the other three along the three
exceptional curves;
\item (Type II) the blow-up of two general points on a
quadric surface with $\mathsf A_1$-singularity;
\item (Type III) ``pinched'' $\F_2$, obtained by gluing a
Hirzebruch surface $\F_2$ to itself along a degree $2$ covering
$(\text{$(-2)$-curve})\cong \PP^1\to \PP^1$;
\item (Type IV) three copies $L_i$, $i\in \Z/3\Z$, of the
blow-up of two general points on a fiber of $\F_2$, with the
proper transform of the fiber collapsed to an $\mathsf A_1$-singular point
and images of exceptional divisors denoted by
$\ell_i$, $\ell'_i$, and of the $(-2)$-curve, by $\lambda_i$,
together with three quadric surfaces $M_i$, $i\in \Z/3\Z$, with
rulings $m_i$ and $m'_i$ meeting at a point on a smooth conic $\mu_i$,
each with multiplicity $2$,
with following pairs of curves glued:
\[ (\lambda_i,\mu_i),\,\, (\ell_i,m_{i+1}),\,\, (\ell'_i,m'_{i+2}),
\qquad\forall\,i\in \Z/3\Z; \]
\item (Type V) Hirzebruch surface $\F_4$ with fiber and
$(-4)$-curve glued.
\end{itemize}
\end{defi}

For the gluing of a scheme to itself along a finite morphism from a closed subscheme to
another scheme, see \cite{ferrand}.
In particular, the fibers of Types I, III, IV, and V are not normal,
and Type IV fibers are not even reduced.

\begin{rema}
Du Val degenerations of del Pezzo surfaces have been studied
classically, see, e.g., \cite{coraytsfasman};
above, only Type II is of this form.
Degenerations of sextic del Pezzo surfaces
with special fiber that is irreducible but not normal
are considered in \cite[Table 3]{fukuokarelative};
our Types III and V are listed there.
Types I and IV have not appeared in the literature; these surfaces are
only embedded by a nontrivial multiple of the anticanonical class.
\end{rema}

Our approach to the construction of families is a systematic application
of root stacks and descent, as in \cite{HKTconic}, \cite{KTsurf}, \cite{KTinvsurf}.
In essence, we exchange ramification of the covers for extra stacky structure.
Other approaches have been used; for instance,
the Minimal Model Program has been applied by Corti \cite{corti} to produce
degenerations over a DVR, with normal irreducible special fiber,
but only under the assumption of an algebraically closed residue field.

The families with basic degeneration that we will construct
(Theorems \ref{thm.construction} and \ref{thm.blunknotsosmooth}) will have explicit global singularity
descriptions.
In particular, we observe new phenomena:
\begin{itemize}
\item degenerations with fibers embedded by
multiples of the anticanonical class, according to
Gorenstein index of $\Q$-Gorenstein singularities of the total space
(Type I);
\item singularities that are not $\Q$-Gorenstein
(Type IV), requiring adjustment by components of the fibers to turn
(an appropriate multiple of) the anticanonical class into a Cartier divisor class.
\end{itemize}

In Section \ref{sect.classify}, we recall the supplementary data attached to
the classification of sextic del Pezzo surfaces over fields and
exhibit smooth families of sextic del Pezzo surfaces, as models in the
case of a general base with \emph{unramified} covers and Brauer group elements
(Theorem \ref{thm.blunksmooth}).
In Section \ref{sect.basicdegen}, we analyze the
five degeneration types.
Section \ref{sect.construction} carries out the construction and
establishes the main theorems
(Theorem \ref{thm.construction}, with the basic construction,
and Theorem \ref{thm.blunknotsosmooth}, exhibiting the desired families).

\medskip
\noindent
\textbf{Acknolwedgments:}
The authors are grateful to Asher Auel and Brendan Hassett for helpful discussions.
The second author was partially supported by
NSF grant 1601912.

\section{Smooth families of sextic del Pezzo surfaces}
\label{sect.classify}
Let $K$ be a field. 
A smooth del Pezzo surface $X$ over $K$ of degree $\mathsf d$ is a smooth projective surface with 
ample anticanonical class $\omega^{-1}_X$, and 
\[
\mathsf d=\mathsf d(X):=\omega_X^2.
\]
Here $\mathsf d$ takes the values $1,\ldots, 9$.
When $K$ is algebraically closed, we have the following description:
when $\mathsf d=9$, $X\cong\bP^2$;
when $\mathsf d=8$, $X$ is isomorphic to $\bP^1\times \bP^1$ or the blowup of $\bP^2$ in one point;
when $\mathsf d\le 7$, $X$ may be obtained by blowing up $\bP^2$ in 
$9- \mathsf d$ points in general position. 
Smooth irreducible curves $C$ on $X$ with $C^2= -1$ are called \emph{exceptional curves},
and the union of the exceptional curves is the \emph{exceptional locus}.
In this paper, we focus on the case $\mathsf d=6$, in which case the
geometric automorphism group of $X$ fits into a split exact sequence
\begin{equation}
\label{eqn:aut}
1\ \ra T\ra \Aut(\overline{X})\ra \mathfrak  S_2\times \mathfrak S_3\ra 1.
\end{equation}

\begin{prop}[Blunk construction \cite{blunk}]
A degree 6 del Pezzo surface over a field $K$ is classified by 
\begin{itemize}
\item[--] \'etale $K$-algebras $L$ and $M$ of respective degrees $2$ and $3$,
\item[--] Brauer group elements $\eta\in \Br(L)[3]$ and $\zeta\in \Br(M)[2]$,
each restricting to $0$ in $\Br(L\otimes_KM)$
and corestricting to $0$ in $\Br(K)$.
\end{itemize}
\end{prop}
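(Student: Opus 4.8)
The plan is to classify such surfaces by Galois descent and then to unwind the resulting cohomology set using \eqref{eqn:aut}. Every degree $6$ del Pezzo surface over $K$ becomes isomorphic over $\overline{K}$ to the split surface $X_0$ (the blow-up of $\PP^2$ at three non-collinear rational points), so isomorphism classes over $K$ are in bijection with $H^1(K,\Aut(\overline{X}_0))$. By \eqref{eqn:aut} the group $\Aut(\overline{X}_0)$ is a semidirect product $T\rtimes(\mathfrak S_2\times\mathfrak S_3)$, with $T\cong\G_m^2$ the ambient torus, $\mathfrak S_3$ permuting the three blown-up points, and $\mathfrak S_2$ generated by the Cremona involution, which acts on $T$ by inversion. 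The projection $\Aut(\overline{X}_0)\to\mathfrak S_2\times\mathfrak S_3$ induces a map $H^1(K,\Aut(\overline{X}_0))\to H^1(K,\mathfrak S_2)\times H^1(K,\mathfrak S_3)$, surjective because the sequence splits, and $H^1(K,\mathfrak S_n)$ is the set of isomorphism classes of \'etale $K$-algebras of degree $n$: this is how $L$ and $M$ arise. By the twisting formalism for nonabelian $H^1$, the fibre over the class of a cocycle $c$ is $H^1(K,{}_cT)$ modulo the centralizer of the image of $c$ in $\mathfrak S_2\times\mathfrak S_3$, a centralizer that always contains the central $\mathfrak S_2$, acting on $H^1(K,{}_cT)$ by $-1$.

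Next I would extract $\eta$ and $\zeta$; write $E:=L\otimes_K M$. Restriction to $L$ trivializes the $\mathfrak S_2$-part of $c$, and over $L$ the surface $X_L$ carries a distinguished triple of mutually disjoint exceptional curves, namely the diagonal of the \'etale double cover $\Spec(L\otimes_K L)\to\Spec L$ of the pair of ``triangles'' of the hexagon of $(-1)$-curves. Contracting it gives a Severi--Brauer surface $P\to\Spec L$, and one sets $\eta:=[P]\in\Br(L)[3]$ (relative dimension $2$). Cohomologically $({}_cT)_L$ is the twisted maximal torus $(\mathrm{R}_{E/L}\G_m)/\G_m$ of $\mathrm{PGL}_3$, and $\eta$ is the image of the restriction to $L$ of the class of $X$ under the connecting map of $1\to\G_m\to\mathrm{R}_{E/L}\G_m\to({}_cT)_L\to1$; Hilbert 90 for the quasitrivial middle term shows this connecting map is an isomorphism onto $\Br(E/L):=\ker(\Br(L)\to\Br(E))$, so in particular $\eta|_E=0$. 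Parallel to this, $M$ singles out one of the three conic bundle structures on $X$, and $\zeta\in\Br(M)[2]$ is the class of the quaternion Azumaya $M$-algebra attached to that structure (a component of Kuznetsov's semiorthogonal decomposition of $D^b(X)$; it can also be read off from the quadric surface obtained by contracting the two sections). A computation over $M$ along the same lines gives $\zeta|_E=0$: over $E$ the Galois action on the hexagon is reduced to at most a single transposition in $\mathfrak S_3$, which fixes a vertex of the distinguished triangle — so $P$ acquires an $E$-point — and fixes the two sections of the distinguished conic bundle — so that conic bundle becomes a Hirzebruch surface over $E$ and the class $\zeta$ dies.

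The corestriction conditions reflect the inversion action noted above. The connecting map defining $\eta$ depends on the choice of one of the two triangles, and the nontrivial automorphism $\sigma$ of $L/K$ interchanges them, sending $P$ to the dual Severi--Brauer surface; hence ${}^{\sigma}\eta=-\eta$, so $\res_{L/K}\cores_{L/K}\eta=\eta+{}^{\sigma}\eta=0$, and since $\res_{L/K}$ is injective on $\Br(K)[3]$ (as $2$ is coprime to $3$) we conclude $\cores_{L/K}\eta=0$; for $\zeta$ the analogous input is that the Brauer classes of the three conic bundle structures of $X$ sum to zero, which gives $\cores_{M/K}\zeta=0$. Conversely, given admissible data one recovers the restrictions to $L$ and to $M$ of the class of $X$ from $\eta$ and $\zeta$ via the connecting maps, and since $[L:K]=2$ and $[M:K]=3$ are coprime the common kernel of these two restriction maps on $H^1(K,{}_cT)$ is trivial, so the class is determined; a short descent argument, again using $\gcd(2,3)=1$, identifies the image with precisely the quadruples satisfying the stated vanishing conditions. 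This gives the bijection (with $\eta$, $\zeta$ canonical up to the automorphisms of $L$, $M$).

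I expect the main obstacle to be the two Brauer-group computations and, above all, the corestriction conditions: these require tracking the noncentral part of the centralizer of $\im(c)$ and the quadratic characters of $L$ and $M$ that control how the conic datum over $M$ is twisted, done uniformly over all shapes of $(L,M)$ (both factors split, one of them a field, and so on). By comparison, the passage to $H^1$, the fibration over $(L,M)$, and the reassembly of the $L$- and $M$-data from coprimality of the degrees are essentially formal.
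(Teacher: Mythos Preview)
The paper does not prove this proposition; it is quoted from \cite{blunk}, and only a summary of the ingredients is given afterwards. That summary matches your outline: forms of the split $X_0$ are parametrized by $H^1(K,\Aut(\overline X_0))$, the projection to $\mathfrak S_2\times\mathfrak S_3$ extracts $L$ and $M$, and the fibre over a fixed pair is governed by $H^1(K,{}_cT)$ for the twisted two-dimensional torus.

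Where you diverge from Blunk (and from the paper's summary) is in how $H^1(K,{}_cT)$ is matched with pairs $(\eta,\zeta)$. Blunk works directly over $K$ with the short exact sequence of tori
\[
0\to R^1_L\times R^1_M\to R^1_{LM}\to T\to 0,
\]
whose long exact sequence (displayed in the paper as Figure~\ref{figure1}) reads off $H^1(K,T)$ in terms of norm and corestriction maps on Picard and Brauer groups; the restriction and corestriction conditions on $\eta$ and $\zeta$ fall out simultaneously, and both injectivity and surjectivity are handled by the same diagram. Your route instead restricts to $L$ and to $M$ separately, identifies the restricted classes with $\eta$ and $\zeta$ via connecting maps there, and reassembles using $\gcd(2,3)=1$. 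This is sound and more visibly geometric---you see the Severi--Brauer surface over $L$ and the conic bundle over $M$ directly---but the step you label a ``short descent argument'' for surjectivity is precisely where Blunk's exact sequence does the real work: coprimality gives you injectivity of the pair of restriction maps, not a construction of a $K$-class from compatible $L$- and $M$-data, and you will in effect rebuild the norm-one torus sequence to finish. Note also that the paper resolves the centralizer ambiguity you mention by passing to \emph{rigidified} surfaces (fixing identifications of the parameter schemes of triples and of opposite pairs of exceptional curves with $\Spec L$ and $\Spec M$), which makes the correspondence with $(\eta,\zeta)$ a genuine bijection rather than one up to $\Aut(L)\times\Aut(M)$.
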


Blunk's construction builds on work of
Colliot-Th\'el\`ene, Karpenko, and Merkurjev
\cite{CTKM}.
A table listing possibilities for the Blunk data, with
corresponding arithmetic invariants, is given in
\cite[Table 4]{auelbernardara}.

The following are the essential ingredients to the Blunk construction.
\begin{itemize}
\item A two-dimensional torus $T$ is canonically associated
with $L$ and $M$, together with a
toric variety $X_0$ for $T$ that is a
sextic del Pezzo surface.
The parameter scheme of
unordered triples of pairwise disjoint exceptional curves of $X_0$ is
$\Spec(L)$ and of unordered pairs of opposite exceptional curves is
$\Spec(M)$.
\item Sextic del Pezzo surfaces for $T$
(i.e., with $T$ as identity component of the
automorphism group scheme) correspond to torsors under $T$, i.e.,
elements of $H^1(K,T)$.
\item Exact sequences lead to a description of $H^1(K,T)$ in terms of
pairs of Brauer group elements satisfying the stated conditions.
\end{itemize}
The inverse map $T\to T$ extends to an automorphism of $X_0$.
We call a pair of points, or curves, of $X_0$ \emph{opposite} if they are exchanged under
this automorphism.
The same terminology applies to exceptional curves of $X$, and to the
singular points of the exceptional locus of $X$.

We describe $X$ as \emph{rigidified} by $L/K$ and $M/K$ when
the parameter scheme of pairwise disjoint exceptional curves of $X$,
respectively of unordered pairs of opposite exceptional curves,
is identified with $\Spec(L)$, respectively $\Spec(M)$.
The same terminology will be used for $\pi\colon X\to S$,
for any integral scheme $S$ with rational function field $K$.
For instance, $\pi\colon X\to S$ could be a
smooth DP6 bundle, i.e., a DP6 bundle such that $\pi$ is smooth.
We also allow $S$ to be a disjoint union of finitely many
integral schemes, in which case the rigidification data consist of \'etale algebras
over $K:=K_1\times\dots\times K_n$, where $K_1$, $\dots$, $K_n$ are
the residue fields at the generic points of the components of $S$.
Such \'etale algebras will be called \emph{rigidification data}.

Blunk's description of sextic del Pezzo surfaces over a field
is best expressed as a description of isomorphism classes of
sextic del Pezzo surfaces with rigidification.
A sextic del Pezzo surface rigidified by $L$ and $M$ is determined uniquely
up to isomorphism by $\eta\in \Br(L)[3]$ and $\zeta\in \Br(M)[2]$ satisfying
the indicated conditions.

We seek a generalization to an arbitrary regular base.
For this, we need to consider
smooth families of sextic del Pezzo surfaces with rigidification as
\emph{equivalent} when there exists a birational equivalence between
them that restricts over the generic point (of every component of the base)
to an isomorphism of rigidified sextic del Pezzo surfaces.

\begin{theo}
\label{thm.blunksmooth}
Let $S$ be a quasi-compact separated regular scheme
with rigidification data $L/K$, $M/K$, and
let $S_L$ and $S_M$ denote the respective
normalizations of $S$ in $L/K$ and $M/K$.
We suppose that $S_L$ and $S_M$ are \'etale over $S$.
Then the Blunk construction gives rise to a bijection between:
\begin{itemize}
\item Equivalence classes of smooth DP6 bundles
\[ \pi\colon X\to S \]
rigidified by $L/K$ and $M/K$, and
\item pairs of Brauer group elements $\eta\in \Br(S_L)[3]$ and
$\zeta\in \Br(S_M)[2]$ which restrict to zero in
$\Br(L\otimes_K M)$ and corestrict to zero in $\Br(K)$.
\end{itemize}
\end{theo}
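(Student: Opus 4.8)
The plan is to construct the two directions of the bijection and verify they are mutually inverse, working relative to the generic points and then extending over all of $S$ using the regularity and the étaleness hypotheses. In one direction, given a smooth DP6 bundle $\pi\colon X\to S$ rigidified by $L/K$ and $M/K$, I first restrict to the generic point (on each component of $S$) to obtain a rigidified sextic del Pezzo surface over $K$, hence by the Blunk construction a pair $(\eta_0,\zeta_0)$ with $\eta_0\in\Br(L)[3]$, $\zeta_0\in\Br(M)[2]$, restricting to zero in $\Br(L\otimes_K M)$ and corestricting to zero in $\Br(K)$. The essential point is then to show that $\eta_0$ and $\zeta_0$ extend (necessarily uniquely) to classes $\eta\in\Br(S_L)[3]$, $\zeta\in\Br(S_M)[2]$. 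Here I would use the canonical torus $T$ attached to $L$ and $M$ from the Blunk construction, form the étale sheaf-theoretic cohomology group $H^1(S,T)$, and observe that the smooth DP6 bundle together with its rigidification determines a class in $H^1(S,T)$ whose image under restriction to the generic point is the class of $X_K$. An exact sequence of étale sheaves on $S$ — the relative version of the one appearing in the Blunk construction, involving $\mathbb G_m$-pushforwards along the étale covers $S_L\to S$ and $S_M\to S$ — computes $H^1(S,T)$ in terms of $\Br(S_L)$ and $\Br(S_M)$ with the corestriction and restriction conditions, exactly as over a field.

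In the other direction, given $\eta\in\Br(S_L)[3]$ and $\zeta\in\Br(S_M)[2]$ satisfying the two conditions, the same exact sequence produces a class in $H^1(S,T)$, and I need to realize this class geometrically as a smooth DP6 bundle rigidified by $L/K$ and $M/K$. The construction is by descent: starting from the split toric model $X_0\times_S(\text{appropriate cover})$ and twisting by the $T$-torsor, using that $T$ acts on $X_0$ and that the action extends the translation action, one obtains a smooth projective scheme over $S$ whose geometric fibers are sextic del Pezzo surfaces and whose parameter schemes of triples of disjoint exceptional curves, resp.\ pairs of opposite exceptional curves, are $S_L$, resp.\ $S_M$. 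Because $S_L$ and $S_M$ are étale over $S$ and $S$ is regular, the descended object is again regular, and flatness and projectivity are preserved; the generic fiber is the Blunk surface attached to $(\eta,\zeta)$ by construction, so the rigidification is as claimed.

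Finally I would check that the two assignments are inverse to one another. In one composite this is immediate from the field case applied at the generic point together with the uniqueness of extension of a Brauer class from the generic point over a regular scheme (purity / the injectivity of $\Br(S_L)\to\Br(L)$, which holds since $S_L$ is regular). In the other composite, two smooth DP6 bundles rigidified by the same data and inducing the same pair $(\eta,\zeta)$ have isomorphic generic fibers as rigidified surfaces, hence a birational equivalence over $S$ restricting to that isomorphism at the generic point — which is exactly the equivalence relation in the statement — so they define the same equivalence class.

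The main obstacle I anticipate is establishing the relative form of the Blunk exact sequence and the identification of $H^1(S,T)$ with the stated Brauer-theoretic data over the regular base $S$: over a field this rests on Hilbert 90 and standard class field theory style manipulations, but over $S$ one must be careful that the higher pushforwards $R^1(\text{pr})_*\mathbb G_m$ along the finite étale covers vanish (so that $\Br$ of the cover, rather than some larger group, appears) and that the torsion and the restriction/corestriction conditions transport correctly — this is where the hypothesis that $S_L$ and $S_M$ are \emph{étale} over $S$, rather than merely finite, is doing the real work, and where the argument parallels, but must be redone from, the treatment in \cite{KTsurf} and \cite{KTinvsurf}.
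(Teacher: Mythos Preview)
Your forward direction is fine in spirit, though the paper takes a more direct route: from a rigidified smooth DP6 bundle it produces Brauer--Severi schemes over $S_L$ (by contracting a triple of pairwise disjoint exceptional curves) and over $S_M$ (by a conic fibration), reading off $\eta$ and $\zeta$ directly rather than via $H^1(S,T)$.

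The genuine gap is in your reverse direction. You assert that the relative Blunk exact sequence ``produces a class in $H^1(S,T)$'' from the Brauer data $(\eta,\zeta)$, i.e., that the map $H^1(S,T)\to\{(\eta,\zeta)\}$ is surjective over a general regular $S$. But the diagram in Figure~\ref{figure1} has rows involving cokernels of norm maps on Picard groups, and lifting $(\eta,\zeta)$ to an element of $H^2(S,R^1_L)\times H^2(S,R^1_M)$ that dies in $H^2(S,R^1_{LM})$ requires the image of $\coker(\Nm_L)\times\coker(\Nm_M)\to\coker(\Nm_{LM})$ to absorb the obstruction. There is no reason this holds for arbitrary regular $S$. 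Indeed, Remark~\ref{rem.notuniqueisomorphismtype} shows that already the kernel of $H^1(S,T)\to\{(\eta,\zeta)\}$ can be nontrivial (isomorphic to $\Pic(S_L\times_S S_M)$ in that example), so the diagram does not reduce to the field picture once Picard groups enter. You also misidentify the obstacle: the vanishing of $R^1\mathrm{pr}_*\mathbb G_m$ for a finite morphism is automatic, so that is not where the difficulty lies; the Picard terms are.

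The paper avoids this by not attempting the diagram chase over all of $S$. Instead it first treats $S=\Spec(R)$ with $R$ a semi-local Dedekind domain, where $\Pic$ vanishes and the Blunk argument goes through verbatim to produce a class in $H^1(\Spec(R),T)$. For general $S$, it spreads the field-case surface out to a smooth DP6 bundle over a dense open $U\subset S$, applies the semi-local case near the finitely many codimension~$1$ generic points of $S\setminus U$ (using that affine opens are cofinal there), glues to obtain a smooth DP6 bundle over $S\setminus Z$ with $\codim Z\ge 2$, and finally extends across $Z$ by taking closure in the anticanonical projective bundle and checking on strict henselizations (where $T$ splits). Your proposal is missing this reduction-and-spreading step, which is what actually replaces the unavailable global surjectivity.
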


\begin{proof}
A smooth DP6 bundle ridigified by $L/K$ and $M/K$
determines Brauer-Severi schemes
of relative dimension $2$ over $S_L$ (by birationally contracting a triple of
pairwise disjoint exceptional curves) and
of relative dimension $1$ over $S_M$ (by a corresponding
fibration in conics).
These have classes in $\Br(S_L)[3]$ and $\Br(S_M)[2]$, respectively;
since their restrictions to the generic points of all the components are the ones
that correspond to $X_K$ under Blunk's correspondence,
they satisfy the indicated conditions on restriction and corestriction.
Restriction to the generic point induces an
injective map on the Brauer group \cite[II.1.10]{GB}, hence
the Brauer group elements are uniquely determined by the
equivalence class of a given
rigidified smooth DP6 bundle.

It remains to show that any pair of Brauer group elements arises from some
smooth DP6 bundle.
We first treat the case $S=\Spec(R)$, where $R$ is a
semi-local Dedekind domain.
Then, we claim that
$\eta$ and $\zeta$ determine an element of
$H^1(\Spec(R),T)$, where $T$ denotes the two-dimensional torus associated
with $S_L/S$ and $S_M/S$.
The argument in \cite{blunk} carries over: in addition to basic functoriality
of the Brauer group one needs only the
concrete description of $H^1$ and $H^2$ of a norm one torus given at the
top of page 47 of op.\ cit.
Following the diagram in Figure \ref{figure1} and using the fact that
a semi-local Dedekind domain has trivial Picard group,
we see that the description remains valid.

Next we treat the general case.
The field case of Blunk's construction yields a
sextic del Pezzo surface over $K$, which spreads out to
a smooth DP6 bundle
\[ X'\to U \]
over a Zariski open dense subscheme $U\subset S$.
The complement $S\setminus U$ has finitely many
irreducible components of codimension $1$, whose generic points
we denote by $x_1$, $\dots$, $x_m$.
By \cite[Prop.\ VIII.1]{raynaudamples}, affine open subsets are
cofinal among all
open subsets $V$ of $\Sigma$ containing $x_1$, $\dots$, $x_m$.
Hence
\[ \varprojlim_{\substack{V\subset S\\ x_1,\dots,x_m\in V}} V \]
is an affine scheme of the form $\Spec(R_1\times\dots\times R_n)$ where each $R_i$
is a semi-local Dedekind domain.
By the case already treated, there is, for some $V$ as above, a
smooth DP6 bundle
\[ X''\to V \]
where, shrinking $V$ if necessary while maintaining
$x_1$, $\dots$, $x_m\in V$, we may suppose
\[ X'\times_U(U\cap V)\cong X''\times_V(U\cap V). \]
Then it is possible to glue $X'$ and $X''$ to obtain a smooth DP6 bundle
over $S^\circ:=S\setminus Z$ for some closed $Z$ that,
at each of its points, has codimension at least $2$.

We conclude by showing that restriction to
$S^\circ$ induces an equivalence of categories between
smooth DP6 bundles over $S$ and over $S^\circ$.
A smooth DP6 bundle $\pi^\circ \colon X^\circ \to S^\circ$
extends canonically to a projective scheme over $S$ as follows:
$X^\circ$ embeds in $\PP((\pi^\circ_*\omega_{X^\circ/S^\circ}^\vee)^\vee)$;
we apply direct image by $\iota\colon S^\circ\to S$ and closure to
obtain $X$ in $\PP(\iota_*(\pi^\circ_*\omega_{X^\circ /S^\circ}^\vee)^\vee)$.
These operations are compatible with \'etale base change.
So, it suffices to show, for $z\in Z$, with the
strictly henselian local ring of $S$ at $z$ denoted by
$\cO_{S,\bar z}$, that
$X^\circ \times_S\Spec(\cO_{S,\bar z})$ extends to a
smooth DP6 bundle over
$\cO_{S,\bar z}$.
This is clear, since
$T$ becomes a split torus after base change to $\cO_{S,\bar z}$.
\end{proof}

\begin{figure}
\begin{tiny}
\[
\xymatrix@C=8pt@R=10pt{
\frac{\G_m(S)}{N(\G_m(S_L))}\times
\frac{\G_m(S)}{N(\G_m(S_M))} \ar[r] \ar[d] &
H^1(S,R^1_L)\times
H^1(S,R^1_M) \ar[r] \ar[d] &
\ker(\Nm_L)\times\ker(\Nm_M) \ar[d] \\
\frac{\G_m(S)}{N(\G_m(S_{LM}))} \ar[r] &
H^1(S,R^1_{LM}) \ar[r] \ar[d] &
\ker(\Nm_{LM}) \\
&H^1(S,T) \ar[d] \\
\coker(\Nm_L)\times \coker(\Nm_M) \ar[r] \ar[d] &
H^2(S,R^1_L)\times
H^2(S,R^1_M) \ar[r] \ar[d] &
\ker(\cores_L)\times \ker(\cores_M) \ar[d] \\
\coker(\Nm_{LM}) \ar[r] &
H^2(S,R^1_{LM}) \ar[r] &
\ker(\cores_{LM})
}
\]
\end{tiny}
\caption{Exact sequence involving $H^1(S,T)$, from short exact
$0\to R^1_L\times R^1_M\to R^1_{LM}\to T\to 0$ of \cite{blunk}.
Here, $S_{LM}$ denotes $S_L\times_SS_M$ and
$R^1$ stands for the kernel of the norm $N$ from the restriction of scalars
$R(\G_m)$
under the indicated extension (e.g., $S_L$) of $S$
to $\G_m$.
Rows are short exact sequences (with leading and trailing $0$ omitted to
save space),
$\Nm$ denotes the norm map on Picard groups
(for the indicated extension, e.g.,
$\Nm_L\colon \Pic(S_L)\to \Pic(S)$), and
$\cores$ denotes corestriction of Brauer groups.}
\label{figure1}
\end{figure}

\begin{rema}
\label{rem.notuniqueisomorphismtype}
It is impossible to strengthen Theorem \ref{thm.blunksmooth} to a
uniqueness statement for the isomorphism class
of a rigidified smooth DP6 bundle,
even if we replace the Brauer classes by Azumaya algebra representatives
(or, what amounts to the same, Brauer-Severi schemes).
This is in constrast to the main theorem of \cite{APS}, which proves such a
result for quadric surface bundles.
Indeed, let $k$ be an algebraically closed field and
let $K$, $L$, and $M$ be rational function fields over $k$ of
transcendence degree $1$, such that $L\otimes_KM$ is a function field of
positive genus.
Take $S$ to be the complement of a suitable finite set of
points in $\PP^1_k$.
The hypotheses of Theorem \ref{thm.blunksmooth} are
satisfied, and any smooth DP6 bundle over $S$
rigidified by $L/K$ and $M/K$ has
associated Brauer-Severi schemes $\PP^2_{S_L}$
and $\PP^1_{S_M}$ (by Tsen's theorem and the fact that the
coordinate rings of $S_L$ and $S_M$ are PIDs).
However, we have nontrivial
$H^1(S,T)\cong \Pic(S_L\times_SS_M)$
(cf.\ Figure \ref{figure1}).
Thus, there is more than one
isomorphism class of smooth DP6 bundles over $S$
rigidified by $L/K$ and $M/K$.
\end{rema}

\section{Basic degenerations}
\label{sect.basicdegen}
In this section we analyze the five degeneration types in
DP6 bundles with basic degeneration.
At the generic point of a divisor on the base
they correspond to the possible degeneration types of the
rigidification data.

Let $\mathfrak{o}_K$ be a DVR with fraction field $K$ and
residue field $\kappa$ of characteristic different from $2$ and $3$.
Let $L/K$ and $M/K$ be rigidification data.
Let $\mathfrak{o}_L$, respectively $\mathfrak{o}_M$ denote the
integral closure of $\mathfrak{o}_K$ in $L$, respectively in $M$.

There are the following possibilities for $L$:
\begin{itemize}
\item[$(L_u)$] $L/K$ is unramified.
\item[$(L_r)$] $L/K$ is ramified.
\end{itemize}
The following are the possibilities for $M$:
\begin{itemize}
\item[$(M_u)$] $M/K$ is unramified.
\item[$(M_r)$] $M/K$ is simply ramified.
\item[$(M_t)$] $M/K$ is totally ramified.
\end{itemize}
The possible combinations are listed in Table \ref{tableLM}.

\begin{table}
\[
\begin{array}{c|cc}
& (L_u) & (L_r) \\ \hline
(M_u) & \text{smooth}   &  \text{I} \\
(M_r) & \text{II} &  \text{III} \\
(M_t)  & \text{IV}   &  \text{V}
\end{array}
\]
\caption{Basic degenerations}
\label{tableLM}
\end{table}

We describe, for each type, the root constructions that replace
ramified covers of schemes by unramified covers of stacks.
We work over $S=\Spec(\mathfrak o_K)$,
and introduce $S_L=\Spec(\mathfrak{o}_L)$ and $S_M=\Spec(\mathfrak{o}_M)$.
\begin{itemize}
\item {\it Type I:} Replace $S$ by $\sqrt{(S,\Spec(\kappa))}$,
make a corresponding replacement of $S_M$.

\item {\it Type II:} Replace $S$ by $\sqrt{(S,\Spec(\kappa))}$,
make a corresponding replacement of $S_L$,
replace $S_M$ by its root stack along the
unramified closed point.

\item {\it Type III:} Replace $S$ by $\sqrt{(S,\Spec(\kappa))}$,
replace $S_M$ by its root stack along the
unramified closed point.

\item {\it Type IV:} Replace $S$ by $\sqrt[3]{(S,\Spec(\kappa))}$,
make a corresponding replacement of $S_L$.

\item {\it Type V:} Replace $S$ by $\sqrt[6]{(S,\Spec(\kappa))}$,
perform cube- respectively square-root replacements of
$S_L$, respectively $S_M$.
\end{itemize}

In summary, we replace $S$ by $\sqrt[n]{(S,\Spec(\kappa))}$,
where $n=2$ for Types I, II, and III;
$n=3$ for Type IV; and $n=6$ for Type V.
For the corresponding replacements $S'_L$ of $S_L$ and
$S'_M$ of $S_M$ we have achieved the following:

\begin{prop}
\label{prop.replace}
In every case, the morphisms
\[ S'_L\to \sqrt[n]{(S,\Spec(\kappa))}\qquad\text{and}\qquad
S'_M\to \sqrt[n]{(S,\Spec(\kappa))} \]
are finite and \'etale.
\end{prop}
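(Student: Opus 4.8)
The plan is to verify the claim type by type, in each case reducing to the basic behavior of root stacks along a regular divisor. The key observation is that all the covers $S_L \to S$ and $S_M \to S$ are, by hypothesis on rigidification data over a DVR, finite flat of degree $2$ (resp.\ $3$), and their ramification is recorded precisely by the cases $(L_u)$, $(L_r)$ and $(M_u)$, $(M_r)$, $(M_t)$. For an unramified cover the root-stack replacement is the pullback, which stays \'etale; the real content is that a cover ramified to order $e$ along the closed point becomes \'etale after passing to the $e$-th root stack $\sqrt[e]{(S,\Spec(\kappa))}$, and more generally an $e$-th root stack pulls back an $e'$-th root stack to something \'etale when $e \mid e'$ in the appropriate sense. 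This is the local model $\Spec(\mathfrak o_K[t]/(t^e - \varpi)) \to \sqrt[e]{(S,\Spec(\kappa))}$ being \'etale, which one checks on the presentation $[\Spec(\mathfrak o_K[u]/(u^e-\varpi))/\mu_e]$ of the root stack.

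First I would dispatch the cases where one of $S_L$, $S_M$ is simply the pullback of the corresponding root stack on $S$: this covers $S_L'$ in Types I and III and $S_M'$ in Type I (the $M_u$ column), where the cover is unramified and the base change of an \'etale morphism along $\sqrt[n]{(S,\Spec(\kappa))} \to S$ is \'etale. Likewise in Type IV, $S_M'$ is the pullback of the unramified $S_M$ along the cube root stack, hence \'etale. Next, for the "matched" ramification cases I would invoke the root-stack uniformization: in Type I, $S_M \to S$ is unramified so one pulls back, while in Type IV, $S_L \to S$ is ramified of order... wait — re-examining Table \ref{tableLM}, Type IV is $(L_u)$, $(M_t)$, so $S_L$ is unramified and gets pulled back (\'etale), and it is $S_M$ that is totally ramified, i.e.\ ramified of order $3$, matched against the cube root stack: here the local model above with $e=3$ gives the \'etaleness. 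Type V is $(L_r)$, $(M_t)$: over $\sqrt[6]{(S,\Spec(\kappa))}$, the cube-root replacement of the order-$2$-ramified $S_L$ and the square-root replacement of the order-$3$-ramified... I must be careful, but the point is that $6 = 2 \cdot 3$ and in each case the order of ramification divides $n$ after the stated auxiliary root operation, so the local model applies.

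For the remaining entries involving a root stack of $S_L$ or $S_M$ along the \emph{unramified} closed point (Types II and III for $S_M$, Type II for $S_L$), I would note that taking the root stack of a regular scheme along a regular divisor and then mapping to the root stack of the base along the image divisor is \'etale precisely because the divisor on the cover is the reduced preimage with the matching multiplicity; this is again the $e=e'$ local model $[\Spec(A[u]/(u^e-a))/\mu_e] \to [\Spec(\mathfrak o_K[v]/(v^e-\varpi))/\mu_e]$ with $a \mapsto \varpi$ a unit times $a$, which is \'etale since $A/\mathfrak o_K$ is. Finiteness is clear throughout, as root stacks are constructed by a finite local presentation and the morphisms in question are proper with finite fibers, or simply because each is a base change or composite of the given finite morphisms $S_L \to S$, $S_M \to S$ with finite root-stack structure maps.

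The main obstacle I anticipate is purely bookkeeping: correctly lining up, for each of the five rows of Table \ref{tableLM}, which of $S_L'$, $S_M'$ is a plain pullback and which requires an auxiliary root operation, and confirming that the ramification index always divides the order $n$ of the root stack on $S$ (so that the \'etale local model truly applies); once the uniformization statement "$e$-th root stack kills order-$e$ ramification" is set up as a lemma, each case is a one-line check.
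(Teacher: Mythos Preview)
Your approach is correct: the statement is verified type by type using the fundamental fact that an order-$e$ totally ramified extension of a DVR becomes \'etale over the $e$-th root stack at the closed point, together with the compatibility of root stacks under \'etale base change. Modulo the self-corrections in your write-up (which do land on the right reading of Table~\ref{tableLM}), each case checks out.

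The paper, however, gives no proof at all: Proposition~\ref{prop.replace} is stated immediately after the list of root-stack replacements and is treated as evident from the construction. In effect the paper has \emph{designed} each replacement $S'_L$, $S'_M$ precisely so that the residual ramification index over $\sqrt[n]{(S,\Spec(\kappa))}$ is $1$, and regards the \'etaleness as a tautology rather than something requiring argument. Your case analysis is exactly the verification one would supply if asked to justify this, and it uses the same underlying principle the paper is implicitly relying on; there is no genuinely different idea on either side, only a difference in how much is written down.

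One small point worth tightening: in Type~IV the paper's recipe makes no replacement of $S_M$, so $S'_M = S_M$ and the map $S_M \to \sqrt[3]{(S,\Spec(\kappa))}$ is the canonical lift of the totally ramified degree-$3$ cover, \'etale by your local model with $e=3$. You arrive at this, but the phrasing ``matched against the cube root stack'' could be made more explicit. Similarly, in Type~V it would be cleaner to say outright that $\sqrt[6]{(S,\Spec(\kappa))} \cong \sqrt{(S,\Spec(\kappa))} \times_S \sqrt[3]{(S,\Spec(\kappa))}$ and then factor each of $S'_L$, $S'_M$ through the appropriate intermediate root stack.
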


The classification of basic degeneration types reflects the classification of
conjugacy classes of cyclic subgroups of $\mathfrak S_2\times \mathfrak S_3$,
acting on toric $X_0$ with $T\subset X_0$, and in particular, on the set of exceptional curves;
see \eqref{eqn:aut}.

\begin{itemize}
\item {\it Type I:} The factor $\mathfrak S_2$, which acts
by the inverse map on $T$
and by exchanging opposite pairs of exceptional curves.
\item {\it Type II:} Cyclic subgroup of order $2$ contained in the factor
$\mathfrak S_3$, which acts, fixing a smooth conic in $X_0$.
\item {\it Type III:} The remaining order $2$ case, where the fixed locus
is a smooth quartic curve in $X_0$ passing through a pair of opposite singular
points of the exceptional locus.
\item {\it Type IV:} Cyclic subgroup of order $3$, acting with three fixed points on $T$ and with two orbits on the set of exceptional curves.
\item {\it Type V:} Cyclic subgroup of order $6$, acting with one fixed point in $T$,
one orbit of three points with stabilizer $\mu_2$, and
one orbit of two points with stabilizer $\mu_3$.
\end{itemize}

\section{Construction}
\label{sect.construction}
In this section, we describe the construction of basic degenerations. The construction proceeds in three steps:
\begin{itemize}
\item[(1)] Root stack construction on the base to permit a DP6 bundle to
extend smoothly across a given divisor.
(See Section \ref{sect.basicdegen}.)
\item[(2)] Birational modification of the smooth DP6 bundle on the root stack.
This step combines the operations of
blowing up, contracting \cite[Prop.\ A.9]{KTsurf}, and
pinching \cite{ferrand}.
\item[(3)] Descent from the root stack to the original base
\cite[Prop.\ 2.5]{KTsurf}.
\end{itemize} 

Step (1), when the base is $S=\Spec(\mathfrak o_K)$, replaces $S$ by
\[ \sqrt[n]{(S,\Spec(\kappa))}, \]
where $n=2$ for Types I, II, and III;
$n=3$ for Type IV; and $n=6$ for Type V.
In the case of a semi-local Dedekind domain, we perform the
corresponding root stack replacement at each closed point.
The passage from semi-local Dedekind domain case to general case will proceed just as
in the proof of Theorem \ref{thm.blunksmooth}.

Step (2) breaks up according to the Type.
We let $s$ denote the gerbe of the root stack;
$X_s$ is a smooth DP6 with $\mu_n$-action.

\begin{itemize}
\item {\it Type I:}
The fixed-point locus consists, geometrically, of four points in $X_s$, which we
blow up.
The special fiber becomes a union of four copies of $\PP^2$, attached along lines to
four $(-1)$-curves on a resolved DP2, double cover of the plane branched along
the union of four lines. The resolved DP2 has six $(-2)$-curves, which may be
flopped to obtain the singular DP2 with four attached copies of the
blow-up of a plane at three collinear points.
The DP2 contracts, leaving four copies of the
plane blown up at three collinear points with $(-2)$-curve contracted,
attached to each other along exceptional divisors.
This is $\Q$-Gorenstein of index $2$.
Twice the anticanonical class gives rise to an embedding in
$\PP^{18}$,
where the image of each component has degree $6$.

\item {\it Type II:}
Let $C\subset X_s$ be the fixed-point locus, a smooth conic
incident to two opposite exceptional curves.
The blow-up $B\ell_CX$ has, as fiber over $s$, the union of $X_s$ and
$C\times \PP^1$, and
the two incident exceptional curves on $X_s$ may be
flopped to obtain a $\mathrm{DP8}$ joined along $C$ with
the blow-up of $C\times \PP^1$ at two points on $C$.
(Note that the exceptional curves, and the points, may be Galois conjugated.)
The $\mathrm{DP8}$ may be contracted to an
ordinary double point singularity on a new
flat family of sextic del Pezzo surfaces over $S$ with Type II
fiber over $s$.

\item {\it Type III:}
Let $C\subset X_s$ be the fixed-point locus,
a quartic rational curve $C$ through
two opposite singular points of the exceptional locus.
The blow-up $B\ell_CX$ has, as fiber over $s$, the union of $X_s$ and a
Hirzebruch surface $\F_2$, where $C$ is identified with the $(-2)$-curve of $\F_2$.
There is a conic bundle $X_s\to \PP^1$, which restricts to
a degree $2$ morphism $C\to \PP^1$.
A corresponding contraction of $B\ell_CX$ has the effect, on the
special fiber, of ``pinching'' $\F_2$ onto its image under the
incomplete linear system of four times a ruling plus
the $(-2)$-curve, consisting of sections whose restriction
to $C$ are pullbacks of sections of $\cO_{\PP^1}(1)$ under $C\to \PP^1$.

\item {\it Type IV:} The fixed-point locus consists, geometrically,
of three points in $X_s$, which we blow up.
The special fiber becomes three copies of $\PP^2$, attached along lines
$\ell$, $\ell'$, $\ell''$
to a smooth cubic surface with $18$ Eckhardt points
(and thus, geometrically, the Fermat cubic surface).
In each copy of $\PP^2$ the action of $\mu_3$ fixes a point and a line;
we blow these up, which replaces the cubic surface by
its blow-up at six of the Eckhardt points, upon which
the self-intersection number changes from $-1$ to $-3$ for
nine exceptional curves: $\ell$, $\ell'$, $\ell''$, and
six others.
The six others each have normal bundle $\cO_{\PP^1}(-3)\oplus \cO_{\PP^1}(-3)$,
and we flop these to obtain
curves, along which the total space
has singularities of type $\mathsf A_{3,1}$, cf.\ \cite[\S III.5]{BHPV},
also known as $\frac{1}{3}(1,1)$.
The remaining three are fibers of Hirzebruch surfaces $\F_1$
(blow-up of a point on $\PP^2$), which we
contract to lines.
After the flops and contractions
the cubic surface has become a surface with singularity type
$9\mathsf A_{3,1}$ and $K^2=0$; in fact, it is geometrically the quotient
of $E\times E$ by the diagonal $\mu_3\subset \mu_3\times \mu_3$,
where $E$ is an elliptic curve of $j$-invariant $0$
and $\mu_3$ acts by elliptic curve automorphisms
(cf., e.g., \cite{tokunagayoshihara}).
The components of the special fiber of multiplicity $2$ have pointwise trivial
but scheme-theoretically nontrivial action of $\mu_3$, making the
quotient map a finite flat morphism of degree $2$ to the
respective reduced subscheme.
The operation of pinching by this morphism
\cite{ferrand} introduces singularities along these components but
transforms the non-Gorenstein $\mathsf A_{3,1}$-singularities
into hypersurface singularities.
Then we contract the non-normal component of the special fiber
to a point.

\item {\it Type V:} On the special fiber
$\mathrm{DP6}$ there is a unique $\mu_6$-fixed point, which we blow up
to obtain a $\mathrm{DP5}$ joined along exceptional curve $\ell$ to
a copy of $\PP^2$ on which the action of $\mu_2\subset \mu_6$ is trivial
and the action of $\mu_3\subset \mu_6$ fixes a point and a line;
the line $m$ that is fixed meets $\ell$ at a point $q$.
On the $\mathrm{DP5}$ there are
three (possibly Galois conjugated) exceptional curves $\ell'$, $\ell''$, $\ell'''$ incident to $\ell$.
As well, the proper transforms of cubic curves on $\mathrm{DP6}$ through the
$\mu_6$-fixed point comprise two pencils of conics on $\mathrm{DP5}$,
out of which we are interested in the unique members passing through $q$.
We blow up $m$, which has normal bundle isomorphic to
$\cO_{\PP^1}(1)\oplus \cO_{\PP^1}(-1)$, so the exceptional divisor is
a copy of the Hirzebruch surface $\F_2$, and the action of
$\mu_2$ on $\F_2$ fixes a pair of sections, while the
action of $\mu_3$ is trivial.
The proper transforms of the two conics through $q$ are disjoint,
each with normal bundle $\cO_{\PP^1}(-1)\oplus \cO_{\PP^1}(-1)$; they may be
flopped, so that the Hirzebruch surface is blown up at two
points on a fiber.
We contract $\ell'$, $\ell''$, and $\ell'''$ to points, so that
the special fiber becomes the union of the blown up Hirzebruch surface and
two copies of $\PP^2$ meeting along a line on which the total space has,
geometrically, three ordinary double point singularities.
The union of the two copies of $\PP^2$ is a Cartier divisor, which
may be contracted to a threefold singularity of type $\mathsf D_4$.
Since, then, the action of $\mu_3$ on the special fiber is trivial,
we may descend so that on the base there is only $\mu_2$-stabilizer,
and the fiber is the $\mathsf A_2$-singular surface that is obtained
by contracting the pair of $(-2)$-curves of the blown up
Hirzebruch surface; the $\mu_2$-fixed locus is a rational curve $n$.
Blowing up $n$, which has normal bundle isomorphic to
$\cO_{\PP^1}(3)\oplus \cO_{\PP^1}(-1)$,
produces exceptional divisor $\F_4$ and replaces the
$\mathsf A_2$-singularity with an $\mathsf A_1$-singularity.
The $\mathsf A_1$-singular surface may be contracted to a curve, leaving
$\F_4$ with $(-4)$-curve glued to a fiber.
\end{itemize}

Step (3) is straightforward in Types I through IV.
In Type V there is an intermediate descent step which replaces the
original $\mu_6$-action by a $\mu_2$-action;
a similar two-stage descent construction has been employed in
the proof of \cite[Thm.\ 6]{KTinvsurf}.

\begin{theo}
\label{thm.construction}
Let $S$ be a quasi-compact separated regular scheme, in whose local rings $2$ and $3$ are invertible, and
let $D_1$, $\dots$, $D_5$ be disjoint regular divisors.
Then the construction described above identifies, up to unique isomorphism:
\begin{itemize}
\item smooth $\mathrm{DP6}$ bundles $\mathcal{X}\to \mathcal{S}$, where $\mathcal{S}$ denotes the
iterated root stack
\begin{equation}
\label{eqn.iteratedrootstack}
\sqrt{(S,D_1\cup D_2\cup D_3)}\times_S\sqrt[3]{(S,D_4)}\times_S\sqrt[6]{(S,D_5)},
\end{equation}
such that on geometric fibers over the gerbe of the root stack over $D_i$ the action of
$\mu_2$ ($i\le 3$), $\mu_3$ ($i=4$), $\mu_6$ ($i=5$) is a toric action of Type $i$.
\item basic $\mathrm{DP6}$ bundles $X\to S$ with
Type $i$ fibers over $D_i$ for $i=1$, $\dots$, $5$,
where $X$ is regular, except for
isolated singularities of type cone over Veronese surface over $D_1$ and
of type cone over rational normal scroll $S_{2,2}$ over $D_4$.
\end{itemize}
\end{theo}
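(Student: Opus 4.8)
The plan is to establish the two claimed bijections in parallel with the proof of Theorem \ref{thm.blunksmooth}, reducing from the general regular base $S$ to the case of a semi-local Dedekind domain and then to a single DVR, where the explicit constructions in Step (2) above are carried out fiber by fiber. First I would reduce to the local situation: a smooth $\mathrm{DP6}$ bundle over the iterated root stack $\mathcal S$ of \eqref{eqn.iteratedrootstack}, together with the Type-$i$ toric action over the gerbes, is by \'etale-local nature of root stacks and by the spreading-out and gluing argument of Theorem \ref{thm.blunksmooth} determined by its restrictions over the strictly henselian local rings at the (finitely many) generic points of $D_1\cup\dots\cup D_5$, together with the data over the open complement. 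Over the generic point of each $D_i$ we are in the situation $S=\Spec(\mathfrak o_K)$ analyzed in Section \ref{sect.basicdegen}: Proposition \ref{prop.replace} guarantees that after the root construction $\sqrt[n]{(S,\Spec(\kappa))}$ the covers $S'_L$, $S'_M$ become finite \'etale, so by Theorem \ref{thm.blunksmooth} (applied on the regular Deligne--Mumford stack $\mathcal S$, whose proof only uses that $2,3$ are invertible and that the normalizations become \'etale) a smooth $\mathrm{DP6}$ bundle over $\mathcal S$ rigidified by the induced data exists and is unique up to equivalence; the toric $\mu_n$-action over the gerbe $s$ is the one coming from the conjugacy class of cyclic subgroups of $\mathfrak S_2\times \mathfrak S_3$ recorded at the end of Section \ref{sect.basicdegen}.

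Next I would turn Step (2) into the construction of the map from the left-hand side to the right-hand side. Given $\mathcal X\to\mathcal S$ with the prescribed toric actions, one performs, over the gerbe $s$ of each root stack, the blow-up/flop/contraction/pinching sequence described Type by Type; each operation is one of: blowing up a regular center (a point or a smooth curve in the fiber, possibly Galois-twisted), the contraction \cite[Prop.\ A.9]{KTsurf}, the flop of a $(-1,-1)$- or $\frac13(1,1)$-curve, and the pinching \cite{ferrand} along the degree-$2$ finite flat map from a non-reduced multiplicity-$2$ component. These are all intrinsic (they do not depend on the choice of equivalence class representative and are stable under \'etale base change), so they are $\mu_n$-equivariant and descend along $\mathcal S\to S$ by \cite[Prop.\ 2.5]{KTsurf}; in Type V the descent is two-stage, first from $\mu_6$ to $\mu_2$ and then to $S$, exactly as in \cite[Thm.\ 6]{KTinvsurf}. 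One checks the resulting $X\to S$ is flat and projective with the claimed Type-$i$ fiber over $D_i$ (reading off the fiber from the explicit description), that $X$ is regular away from the stated isolated singular points, and that over $D_1$ the singularity is a cone over the Veronese (this is the $\Q$-Gorenstein index $2$ point embedded by $-2K$ in $\PP^{18}$, the cone point on each of the four components) while over $D_4$ it is the cone over $S_{2,2}$ (the non-$\Q$-Gorenstein point produced by pinching and contracting the non-normal component). Uniqueness of $X$ up to unique isomorphism follows because it is birational over $S$ to the canonical model and agrees with the smooth $\mathrm{DP6}$ bundle over $S\setminus\bigcup D_i$, which pins it down via the anticanonical (or appropriate multiple) embedding, just as in Theorem \ref{thm.blunksmooth}.

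For the reverse map, from basic $\mathrm{DP6}$ bundles $X\to S$ to smooth $\mathrm{DP6}$ bundles over $\mathcal S$, I would run the Step (2) recipes backwards: base change to the root stack $\mathcal S$, where over each gerbe the singular fiber acquires a $\mu_n$-action and, after the inverse sequence of blow-ups and (un)pinchings and (un)contractions — which are forced because the centers are recovered as the singular or non-normal loci — one recovers a smooth $\mathrm{DP6}$ with the toric action. The key point is that the operations of Step (2) are birational modifications whose inverses are determined by the geometry of the special fiber (e.g.\ the $\mathsf A_1$-, $\mathsf A_2$-, $\mathsf D_4$-, $\mathsf A_{3,1}$-, or Veronese/scroll-cone singularities, and the non-normal or non-reduced locus, tell you unambiguously what to blow up or pinch), so the two constructions are mutually inverse; combined with the local-to-global argument this yields the bijection.

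The main obstacle I expect is Step (2) itself, specifically verifying that the flop/contraction/pinching operations are well-defined globally and commute with \'etale base change and Galois twisting, and that after pinching and contracting the non-normal component the total space over $D_4$ has exactly the asserted cone-over-$S_{2,2}$ singularity (and over $D_1$ the cone over the Veronese) with no worse singularities elsewhere. In particular the Type IV analysis is delicate: one must track the $\frac13(1,1)$-singularities through the flops, confirm that pinching along the degree-$2$ map on the multiplicity-$2$ components turns the non-$\Q$-Gorenstein $\mathsf A_{3,1}$-points into hypersurface singularities, and then identify the image of the contracted non-normal component. Handling the non-reducedness and the scheme-theoretic (but pointwise trivial) $\mu_3$-action on those components, so that the quotient map is genuinely finite flat of degree $2$ and \cite{ferrand} applies, is the technical heart of the argument; everything else is a routine, if lengthy, bookkeeping exercise in blow-ups and descent that parallels \cite{KTsurf} and \cite{KTinvsurf}.
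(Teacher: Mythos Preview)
Your plan is essentially the paper's: run the Step~(2) blow-up/flop/contraction/pinching recipe forward, reverse it, and descend from the root stack via \cite[Prop.\ 2.5]{KTsurf}, following the pattern of \cite[\S5--\S6]{KTsurf}. A few points where you diverge or underestimate the work:

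Your first paragraph is a detour. Theorem~\ref{thm.construction} does not involve rigidification data or Brauer classes, so invoking Proposition~\ref{prop.replace} and Theorem~\ref{thm.blunksmooth} to produce a smooth bundle on $\mathcal S$ is beside the point: that bundle is simply given as input, and the spreading-out argument you describe belongs to Theorem~\ref{thm.blunknotsosmooth}. The paper's proof of Theorem~\ref{thm.construction} is more direct: it just checks that the construction and its inverse behave as claimed, with the regularity of the total space verified by explicit coordinate computation (written out in full for Type~III, where one finds a $\mathsf D_\infty$ hypersurface singularity before descent that becomes regular once $t^2=f$).

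You correctly flag Type~IV as the crux, but there is one obstacle you do not anticipate. After pinching and before contracting the non-normal component, the structure sheaf of the surface to be contracted has nonvanishing $H^2$, so \cite[Prop.\ A.9]{KTsurf} does not apply directly. The paper circumvents this by showing, for $L\cong\psi^*\cO_Y(1)$ and suitable powers, that $R^1\psi_*(L^n)=0$ and $R^2\psi_*(L^n)$ is locally free of projective dimension~$1$ on the contracted locus, which recovers the base-change conclusion of \cite[Prop.\ A.8(iv)]{KTsurf}. Without this step, flatness of the resulting family over $D_4$ is not secured.

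For the reverse direction the paper names the specific new ingredient: the pinching in Type~IV is inverted by \emph{normalization}. Your ``un-pinching'' should be made explicit in this way.
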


\begin{proof}
The construction described above is checked, in each Type,
to transform a smooth $\mathrm{DP6}$ bundle over the root stack
to a basic $\mathrm{DP6}$ bundle.
The construction may be reversed.
The pattern of argument follows \cite[\S 5--\S 6]{KTsurf}, except that
for the reverse construction there is a new ingredient: the pinching operation in
Type IV is reversed by normalization.

We give extensive details in Type III and sketch the arguments
in Types IV and V.

\medskip

\emph{Type III:} The construction evidently leads
to a $\mathrm{DP6}$ bundle with Type III fibers over $D_3$.
It remains to verify that the total space is regular.
For this we may work locally, and assume we are at a point of $D_3$ with
separably closed residue field, $D_3$ defined by $f=0$, and root stack with
gerbe of the root stack defined by $t=0$ where $t^2=f$.
Write $\mathrm{DP6}$ has the hypersurface $xyz=\tilde x\tilde y\tilde z$ in
$\PP^1\times \PP^1\times \PP^1$, with homogeneous coordinates
$x$, $\tilde x$, etc., on the respective $\PP^1$ factors, where $\mu_2$ acts by
swapping the first two factors.
The conic bundle is given by projection to the third factor,
and $C$ is defined by $x\tilde y=\tilde xy$ in the $\mathrm{DP6}$,
passing through $(0,0,\infty)$ and $(\infty,\infty,0)$.
We denote the exceptional curves on the $\mathrm{DP6}$ by
$E_{0 \infty \PP}$, etc., reflecting the images under projection to the
three $\PP^1$ factors.
Then we compute that multiplication by
$t/(x\tilde x^{-1}-y\tilde y^{-1})$ identifies
\[
\cO(E_{0 \infty \PP} + E_{\PP \infty 0} + E_{\infty \PP 0} + E_{\infty 0 \PP})
\]
with
$\cI/\cI^2$, where $\cI$ denotes the ideal sheaf of the family of DP6 over the
gerbe of the root stack, which is to be contracted.
By \cite[Prop.\ A.9]{KTsurf}, contraction yields
$\PP^1$ over the gerbe of the root stack whose normal cone is identified with
$\Spec(\bigoplus_{n\ge 0} \psi_*(\cI/\cI^2)^n)$, where $\psi$ denotes the
contraction to $\PP^1$.
We compute:
\[ \psi_*(\cO(E_{0 \infty \PP} + E_{\PP \infty 0} + E_{\infty \PP 0} + E_{\infty 0 \PP}))\cong \cO\oplus \cO\oplus \cO(1), \]
where global sections  $x\tilde x^{-1}$, $y\tilde y^{-1}$, $1$, $x\tilde x^{-1}y\tilde y^{-1}$,
on the left correspond to $(1,0,0)$, $(0,1,0)$, $(0,0,z)$, $(0,0,\tilde z)$, respectively,
on the right; the action of $\mu_2$ swaps the first two factors.
The anti-invariant section $t$ corresponds to $(1,-1,0)$.
There is a quadratic relation with term $t^2$,
defining a hypersurface singularity of type $\mathsf D_\infty$.
(Such singularities have been encountered in \cite[\S 3]{KTinvsurf}.)
After descent this yields a hypersurface whose
defining equation includes a nontrivial linear term $f$, hence
the total space is regular.

\medskip

\emph{Type IV:} We indicate, in coordinates, the curves in the
blown-up $\mathrm{DP6}$ that are flopped.
Write the $\mathrm{DP6}$ in the standard way as
compactification of $\G_m^2$ with coordinates $x$ and $y$,
with action of primitive third root of unity $\omega$ by
\[ (x,y)\mapsto (x^{-1}y,x^{-1}). \]
Then, $\ell$, $\ell'$, $\ell''$ are the exceptional curves obtained by blowing up
\[ p:=(1,1),\qquad p':=(\omega,\omega^2),\qquad p'':=(\omega^2,\omega). \]
The exceptional curves on the cubic surface are listed in
Table \ref{tableexceptional}.
The cubic surface is blown up at two points on each of
$\ell$, $\ell'$, $\ell''$, namely the Eckhardt points, incident to the
proper transforms of the cubic curves in Table \ref{tableexceptional}.
Those become the six $(-3)$-curves that are flopped.
Then, the blown-up $\mathrm{DP6}$ is contracted to a surface with
singularity type $6\mathsf A_{3,1}$, with three
floating $(-1)$ curves (i.e., contained in the smooth locus)
corresponding to the three quartic curves in Table \ref{tableexceptional}.
If the floating $(-1)$ curves would be contracted, the surface would have
anticanonical degree $2$ and would be the surface
listed in \cite{miura} as No.\ 9 in \cite[Table 1]{miura}.
In fact, this is a toric surface, with fan
\[
\xymatrix@R=14pt@C=14pt{
&\\
&&&\\
&&{\bullet}\ar[ull]\ar[uul]\ar[ur]\ar[drr]\ar[ddr]\ar[dl]\\
&&&&\\
&&&
}
\]
and three points,
incidence points to fibers of each of the three visible conic bundle structures,
whose blow-ups recover the three floating $(-1)$-curves.
The three remaining $(-3)$-curves correspond to
$\ell$, $\ell'$, $\ell''$.
When contracted, the surface acquires anticanonical degree $0$,
and is transformed by the pinching operation to the degree $3$ cover
of $\PP^1\times \PP^1$ of the form $\Spec(\mathcal{A})$, where $\mathcal{A}$ is
a coherent sheaf of algebras of the form
\[ \cO_{\PP^1\times \PP^1}\oplus
\cO_{\PP^1\times \PP^1}(-1,-2)\oplus
\cO_{\PP^1\times \PP^1}(-2,-4), \]
with algebra structure determined by a morphism
\[ \cO_{\PP^1\times \PP^1}(-3,-6)\to \cO_{\PP^1\times \PP^1} \]
vanishing along the union of three rulings with multiplicity $1$ and
three rulings (from the other family of rulings) with multiplicity $2$.
This surface is singular along the multiplicity $2$ rulings.
Although the structure sheaf of the surface has nonvanishing $H^2$,
the conclusion of \cite[Prop.\ A.9]{KTsurf} still holds in the strong form
needed here (flatness, compatibility with base change).
Indeed, if $\psi\colon X\to Y$ denotes the contraction,
$X\cong B\ell_FY$
(where $F\subset Y$ is a section of the restriction of $Y$ to the
gerbe of the root stack), with line bundle $L\cong \psi^*\cO_Y(1)$ on $X$,
then after replacing $L$ by a suitable power we find that
$R^1\psi_*(L^n)=0$ and
$R^2\psi_*(L^n)$ is (the direct image under $F\to Y$ of)
a locally free sheaf on $F$, for all $n\ge 1$.
The latter has projective dimension $1$
at points of $F$.
By the machinery of cohomology and base change,
the formation of $\psi_*(L^n)$ commutes with arbitrary base change,
i.e., the conclusion of \cite[Prop.\ A.8(iv)]{KTsurf} holds.
\begin{table}
\[
\begin{array}{c|c|c}
\text{number}&\text{description}&\text{defining equations} \\ \hline
3&\text{exc.\ of blow-up}& \\ \hline
6&\text{proper transf.\ of exc.\ curves}& \\ \hline
9&\text{proper transf.\ of conics}&x\in \mu_3,\,\,y\in \mu_3,\,\,y/x\in \mu_3 \\ \hline
&& {\scriptstyle x+y+1=0,\,\, x+y+xy=0,} \\
6&\text{proper transf.\ of cubics}& {\scriptstyle \omega x+\omega^2y+1=0,\,\, \omega x+\omega^2y+xy=0,} \\
&& {\scriptstyle \omega^2x+\omega y+1=0,\,\, \omega^2x+\omega y+xy=0} \\ \hline
3&\text{proper transf.\ of quartics}&
xy=1,\,\, y=x^2,\,\, x=y^2
\end{array}
\]
\caption{The $27$ exceptional curves on the blow-up of $\mathrm{DP6}$ at
three points}
\label{tableexceptional}
\end{table}

\medskip
\emph{Type V:} The step which contracts a conic bundle over a
rational curve with $\mathsf A_1$-singular total space
yields a singularity of type $\mathsf J_{2,\infty}$
\cite{goryunov}, \cite{siersma}.
\end{proof}

\begin{theo}
\label{thm.blunknotsosmooth}
Let $S$ be a quasi-compact separated regular scheme, in whose local rings $2$ and $3$ are invertible, with
rigidification data $L/K$, $M/K$, and
let $D_1$, $\dots$, $D_5$ be disjoint regular divisors.
We let $S_L$ and $S_M$ denote the respective normalizations of $S$ in $L/K$ and $M/K$.
We suppose that $S_L\to S$ is finite and flat, ramified over $D_1\cup D_3\cup D_5$,
and $S_M\to S$ is finite and flat, simply ramified over $D_2\cup D_3$ and
totally ramified over $D_4\cup D_5$.
We introduce notation $D_{L,i}$ and $D_{M,i}$ for the reduced divisor
of $S_L$, respectively $S_M$, over $D_i$, for $i=1$, $\dots$, $5$,
and write
\[ D_{M,2}=D_{M_u,2}\cup D_{M_r,2},
\qquad
D_{M,3}=D_{M_u,3}\cup D_{M_r,3}, \]
to distinguish the components where the covering map is unramified and
simply ramified.
Then the Blunk construction gives rise to a bijection between
\begin{itemize}
\item Equivalence classes of basic $\mathrm{DP6}$ bundles with
Type $i$ fibers over $D_i$ for $i=1$, $\dots$, $5$, where the total space
is regular, exception for
isolated singularities of type cone over Veronese surface over $D_1$ and
of type cone over rational normal scroll $S_{2,2}$ over $D_4$,
rigidified by $L/K$ and $M/K$, and
\item pairs of Brauer group elements
\[ \eta\in \Br(S_L)[3] \setminus (D_{L,4} \cup D_{L,5}) \]
and
\[ \zeta\in \Br(S_M)[2] \setminus (D_{M,1} \cup D_{M_u,2} \cup D_{M_u,3} \cup
D_{M,5}) \]
which restrict to zero in
$\Br(L\otimes_K M)$ and corestrict to zero in $\Br(K)$.
\end{itemize}
\end{theo}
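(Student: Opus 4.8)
The plan is to obtain the stated bijection by factoring it through the iterated root stack $\mathcal S$ of \eqref{eqn.iteratedrootstack}: first identify basic DP6 bundles over $S$ with smooth DP6 bundles over $\mathcal S$ via Theorem~\ref{thm.construction}, then apply the Blunk-type correspondence of Theorem~\ref{thm.blunksmooth} with $\mathcal S$ in place of $S$, and finally translate the resulting Brauer data on the stacky covers down to $S_L$ and $S_M$ by comparing the Brauer group of a root stack with that of its coarse space.

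For the first step, Theorem~\ref{thm.construction} identifies, up to unique isomorphism, a basic DP6 bundle $X\to S$ with Type $i$ fibers over $D_i$ and with the prescribed regularity and isolated-singularity behavior with a smooth DP6 bundle $\mathcal X\to\mathcal S$ whose geometric fibers over the gerbes carry the prescribed toric actions. The rigidification by $L/K$ and $M/K$ is determined by the restriction over $S\setminus\bigcup_iD_i$, where $X$ and $\mathcal X$ agree; tracking exceptional curves and conic bundle structures through the blow-ups, flops, contractions and pinchings of Section~\ref{sect.construction}, one checks that $\mathcal X$ is rigidified by the covers $S'_L$, $S'_M$ obtained by normalizing $\mathcal S$ in $L/K$, $M/K$, which by Proposition~\ref{prop.replace} are finite and \'etale over $\mathcal S$ and hence are legitimate rigidification data. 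Under this dictionary the equivalence relation on basic DP6 bundles corresponds to the equivalence relation on smooth DP6 bundles over $\mathcal S$.

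Next I would check that the proof of Theorem~\ref{thm.blunksmooth} goes through with the regular Deligne--Mumford stack $\mathcal S$ and its finite \'etale rigidification data $S'_L$, $S'_M$: a rigidified smooth DP6 bundle over $\mathcal S$ yields Brauer--Severi schemes of relative dimensions $2$ and $1$ over $S'_L$ and $S'_M$, hence classes $\eta'\in\Br(S'_L)[3]$ and $\zeta'\in\Br(S'_M)[2]$ which die in $\Br(S'_L\times_{\mathcal S}S'_M)$ and whose corestriction dies in $\Br(\mathcal S)$; conversely such a pair produces a class in $H^1(\mathcal S,T)$ and thus the bundle. All the inputs persist: restriction to a dense open is injective on Brauer groups for these regular stacks, and the reduction to the semilocal case now uses the $n$-th root stack of a semilocal Dedekind domain, whose Picard group is $\Z/n\Z$ rather than trivial — one sees, e.g.\ by passing to the gerbe over which $T$ splits, that this does not affect the exact sequence of Figure~\ref{figure1}. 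Since all schemes and stacks in sight are regular, $\Br(-)$ injects into the Brauer group of the function field, so vanishing in $\Br(S'_L\times_{\mathcal S}S'_M)$, respectively $\Br(\mathcal S)$, is the same as vanishing in $\Br(L\otimes_KM)$, respectively $\Br(K)$, exactly as in Blunk's original conditions.

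The remaining, and main, step is to identify $\Br(S'_L)[3]$ with $\Br(S_L\setminus(D_{L,4}\cup D_{L,5}))[3]$ and $\Br(S'_M)[2]$ with $\Br(S_M\setminus(D_{M,1}\cup D_{M_u,2}\cup D_{M_u,3}\cup D_{M,5}))[2]$. Each of $S'_L$, $S'_M$ is an iterated root stack over the regular scheme $S_L$, respectively $S_M$, along regular divisors over the $D_i$, with root orders read off from Section~\ref{sect.basicdegen} — for instance an order $3$ root stack over $D_{L,4}$ and $D_{L,5}$ for $S'_L$, and an order $2$ root stack over $D_{M_u,2}$ for $S'_M$ (only the unramified component of the branch locus of $S_M\to S$ over $D_2$ being stackified), while over a component along which the cover is already ramified to the relevant order no root stack is taken. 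The identification then reduces to the purity-type statement that a Brauer class on $Y\setminus E$ extends over $\sqrt[m]{(Y,E)}$ precisely when its ramification along $E$ is annihilated by $m$; for a class of prime order $\ell$ this says it extends iff it is unramified along $E$ or $\ell\mid m$. Applying this with $\ell=3$ and $\ell=2$ over each $D_i$ — so that $3$-torsion classes acquire free ramification exactly over $D_{L,4}$, $D_{L,5}$ and, by coprimality of $2$ and $3$, are forced to be unramified over the order $2$ root stacks, while $2$-torsion classes acquire free ramification exactly over $D_{M,1}$, $D_{M_u,2}$, $D_{M_u,3}$, $D_{M,5}$ — produces precisely the divisor lists in the statement. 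The bulk of the work, and the place where errors are easiest to make, is this case-by-case bookkeeping across the five types, keeping straight which component of each branch divisor is stackified and at what order; the purity statement itself I would either quote from the literature on root stacks or prove by the local cyclic-algebra computation substituting $f=t^m$.
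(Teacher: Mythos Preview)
Your outline is essentially the paper's own proof: factor through the iterated root stack via Theorem~\ref{thm.construction}, invoke the smooth Blunk correspondence over $\mathcal S$, and match Brauer data on the stacky covers with Brauer data on the punctured $S_L$, $S_M$ by the ``orbifold kills ramification'' purity statement.

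The one place where you are too casual is precisely where the paper invests its effort. In the semilocal Dedekind case over the root stack, the Picard groups appearing in Figure~\ref{figure1} are no longer trivial, and your remark that ``passing to the gerbe over which $T$ splits'' shows this ``does not affect the exact sequence'' is not a proof. What is actually needed, and what the paper checks type by type, is that in Figure~\ref{figure1} the top-right vertical map $\ker(\Nm_L)\times\ker(\Nm_M)\to\ker(\Nm_{LM})$ is surjective and the bottom-left vertical map $\coker(\Nm_L)\times\coker(\Nm_M)\to\coker(\Nm_{LM})$ is an isomorphism; for instance in Type~II the first is $\Z/2\Z\to\Z/2\Z$ and the second is $0\to 0$ when $L/K$ is split, while for $L/K$ nonsplit one uses that $\Nm_L$ and $\Nm_{LM}$ vanish and $\Nm_M$ is surjective. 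Without this verification your diagram chase does not yield the element of $H^1(\mathcal S,T)$, so this is the step to flesh out.
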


Based on Proposition \ref{prop.replace}, we see that the
iterated root stack \eqref{eqn.iteratedrootstack}
has finite \'etale covers
\begin{equation}
\label{eqn.deg2cover}
\sqrt{(S_L,D_{L,2})}\times_{S_L}\sqrt[3]{(S_L,D_{L,4}\cup D_{L,5})}
\end{equation}
of degree $2$ and
\begin{equation}
\label{eqn.deg3cover}
\sqrt{(S_M,D_{M,1}\cup D_{M_u,2}\cup D_{M_u,3}\cup D_{M,5})}
\end{equation}
of degree $3$.

\begin{proof}
The passage from basic $\mathrm{DP6}$ bundles to Brauer group elements is
just as in the first paragraph of the proof of Theorem \ref{thm.blunksmooth}.
The rest of the proof is structured just as in the remainder of the
proof of Theorem \ref{thm.blunksmooth}, where we note that by
Theorem \ref{thm.construction}, it suffices to exhibit a smooth
$\mathrm{DP6}$-fibration over the iterated root stack \eqref{eqn.iteratedrootstack}.
The argument
in the case of a semi-local Dedekind domain relies
on the fact that the given Brauer group elements extend to
the Brauer groups of \eqref{eqn.deg2cover}, respectively
\eqref{eqn.deg3cover},
since the orbifold structure kills any ramification, as observed, e.g., in
\cite[Lemma 2]{oesinghaus},
and on the fact that in
Figure \ref{figure1}, the top-right vertical map is surjective,
while the bottom-left vertical map is an isomorphism.
Indeed, the target of the top-right vertical map is zero in
Types I, III, IV, and V, and in Type II this map is an
isomorphism $\Z/2\Z\to \Z/2\Z$.
Case-by-case verification takes care of the claim concerning the
bottom-left vertical map, e.g.,
in Type II this is $0\to 0$ when $L/K$ is split, and otherwise
$\Nm_L$ and $\Nm_{LM}$ are both zero, while $\Nm_M$ is surjective.
The remainder of the argument is exactly as in the proof of
Theorem \ref{thm.blunksmooth}: restriction to the
complement of a closed substack of codimension at least $2$ induces
an equivalence of categories between smooth
$\mathrm{DP6}$ bundles over the iterated root stack
\eqref{eqn.iteratedrootstack} and
smooth $\mathrm{DP6}$ bundles over the complement.
\end{proof}

\bibliographystyle{plain}
\bibliography{brauer}

\begin{thebibliography}{10}

\bibitem{AHTV}
N.~Addington, B.~Hassett, Yu. Tschinkel, and A.~V\'arilly-Alvarado.
\newblock Cubic fourfolds fibered in sextic del {P}ezzo surfaces, 2016.
\newblock {\tt arXiv:1606.05321}.

\bibitem{auelbernardara}
A.~Auel and M.~Bernardara.
\newblock Semiorthogonal decompositions and birational geometry of del {P}ezzo
  surfaces over arbitrary fields.
\newblock {\em Proc. Lond. Math. Soc. (3)}, 117(1):1--64, 2018.

\bibitem{APS}
A.~Auel, R.~Parimala, and V.~Suresh.
\newblock Quadric surface bundles over surfaces.
\newblock {\em Doc. Math.}, (Extra vol.: Alexander S. Merkurjev's sixtieth
  birthday):31--70, 2015.

\bibitem{BHPV}
W.~P. Barth, K.~Hulek, C.~A.~M. Peters, and A.~Van~de Ven.
\newblock {\em Compact complex surfaces}, volume~4 of {\em Ergebnisse der
  Mathematik und ihrer Grenzgebiete. 3. Folge.}
\newblock Springer-Verlag, Berlin, second edition, 2004.

\bibitem{blunk}
M.~Blunk.
\newblock Del {P}ezzo surfaces of degree 6 over an arbitrary field.
\newblock {\em J. Algebra}, 323(1):42--58, 2010.

\bibitem{CTKM}
J.-L. Colliot-Th{\'e}l{\`e}ne, N.~A. Karpenko, and A.~S. Merkurjev.
\newblock Rational surfaces and the canonical dimension of the group {${\rm
  PGL}_6$}.
\newblock {\em Algebra i Analiz}, 19(5):159--178, 2007.

\bibitem{coraytsfasman}
D.~F. Coray and M.~A. Tsfasman.
\newblock Arithmetic on singular {D}el {P}ezzo surfaces.
\newblock {\em Proc. London Math. Soc. (3)}, 57(1):25--87, 1988.

\bibitem{corti}
A.~Corti.
\newblock Del {P}ezzo surfaces over {D}edekind schemes.
\newblock {\em Ann. of Math. (2)}, 144(3):641--683, 1996.

\bibitem{ferrand}
D.~Ferrand.
\newblock Conducteur, descente et pincement.
\newblock {\em Bull. Soc. Math. France}, 131(4):553--585, 2003.

\bibitem{fujita}
Takao Fujita.
\newblock On del {P}ezzo fibrations over curves.
\newblock {\em Osaka J. Math.}, 27(2):229--245, 1990.

\bibitem{fukuokarelative}
T.~Fukuoka.
\newblock Relative linear extensions of sextic del {P}ezzo fibrations over
  curves, 2018.
\newblock {\tt arXiv:1803.01264}.

\bibitem{fukuokarefinement}
T.~Fukuoka.
\newblock Refinement of the classification of weak {F}ano threefolds with
  sextic del {P}ezzo fibrations, 2019.
\newblock {\tt arXiv:1903.06872}.

\bibitem{goryunov}
V.~V. Goryunov.
\newblock Bifurcation diagrams of some simple and quasihomogeneous
  singularities.
\newblock {\em Funktsional. Anal. i Prilozhen.}, 17(2):23--37, 1983.

\bibitem{GB}
A.~Grothendieck.
\newblock Le groupe de {B}rauer. {I}--{III}.
\newblock In {\em Dix expos\'es sur la cohomologie des sch\'emas}, volume~3 of
  {\em Adv. Stud. Pure Math.}, pages 46--188. North-Holland, Amsterdam, 1968.

\bibitem{HKTconic}
B.~Hassett, A.~Kresch, and Yu. Tschinkel.
\newblock Stable rationality and conic bundles.
\newblock {\em Math. Ann.}, 365(3-4):1201--1217, 2016.

\bibitem{HKTthree}
B.~Hassett, A.~Kresch, and Yu. Tschinkel.
\newblock Stable rationality and smooth families of threefolds, 2018.
\newblock {\tt arXiv:1802.06107}.

\bibitem{KTsurf}
A.~Kresch and Yu. Tschinkel.
\newblock Models of {B}rauer-{S}everi surface bundles, 2017.
\newblock {\tt arXiv:1708.06277}, to appear in Moscow Math. J.

\bibitem{KTsb}
A.~Kresch and Yu. Tschinkel.
\newblock Stable rationality of {B}rauer-{S}everi surface bundles, 2017.
\newblock {\tt arXiv:1709.10151}, to appear in Manuscripta Math.

\bibitem{KTbrinv}
A.~Kresch and Yu. Tschinkel.
\newblock Brauer groups of involution surface bundles, 2018.
\newblock {\tt arXiv:1809.05458}.

\bibitem{KTinvsurf}
A.~Kresch and Yu. Tschinkel.
\newblock Involution surface bundles over surfaces, 2018.
\newblock {\tt arXiv:1807.01592}.

\bibitem{KTtriple}
A.~Kresch and Yu. Tschinkel.
\newblock Models of triple covers.
\newblock {\em Mat. Zametki}, 105(5):798--800, 2019.

\bibitem{kuznetsovsextic}
A.~Kuznetsov.
\newblock Derived categories of families of sextic del {P}ezzo surfaces, 2017.
\newblock {\tt arXiv:1708.00522}.

\bibitem{miura}
T.~Miura.
\newblock Classification of del {P}ezzo surfaces with $\frac{1}{3}(1,1)$- and
  $\frac{1}{4}(1,1)$-singularities, 2019.
\newblock {\tt arXiv:1903.00679}.

\bibitem{oesinghaus}
J.~Oesinghaus.
\newblock Conic bundles and iterated root stacks.
\newblock {\em Eur. J. Math.}, 5(2):518--527, 2019.

\bibitem{raynaudamples}
M.~Raynaud.
\newblock {\em Faisceaux amples sur les sch\'{e}mas en groupes et les espaces
  homog\`enes}.
\newblock Lecture Notes in Mathematics, Vol. 119. Springer-Verlag, Berlin-New
  York, 1970.

\bibitem{schaffler}
L.~Schaffler.
\newblock The {K}{S}{B}{A} compactification of the moduli space of
  ${D}_{1,6}$-polarized {E}nriques surfaces, 2016.
\newblock {\tt arXiv:1608.02564}.

\bibitem{siersma}
D.~Siersma.
\newblock Isolated line singularities.
\newblock In {\em Singularities, {P}art 2 ({A}rcata, {C}alif., 1981)},
  volume~40 of {\em Proc. Sympos. Pure Math.}, pages 485--496. Amer. Math.
  Soc., Providence, RI, 1983.

\bibitem{tokunagayoshihara}
H.~Tokunaga and H.~Yoshihara.
\newblock Degree of irrationality of abelian surfaces.
\newblock {\em J. Algebra}, 174(3):1111--1121, 1995.

\end{thebibliography}

\end{document}